\def\G{\mathcal{G}}
\def\A{\mathcal{A}}
\def\B{\mathcal{B}}
\def\C{\mathcal{C}}
\newtheorem{theorem}{Theorem}
\newtheorem{lemma}[theorem]{Lemma}
    \noindent\textsc{Jordi Castellv\'i}\\
	\noindent\textsc{Marc Noy}\\
	\noindent\textsc{Cl\'ement Requil\'e}\\
\title{Enumeration of chordal planar graphs and maps}
\author{Jordi Castellv\'i \and Marc Noy \and Cl\'ement Requil\'e}
\begin{document}
\maketitle

\begin{abstract}
	We determine the number of labelled chordal planar graphs with $n$ vertices, which is  asymptotically  $c_1\cdot n^{-5/2} \gamma^n n!$ for a constant $c_1>0$ and $\gamma \approx 11.89235$. 
	We also determine the number of rooted simple chordal planar maps with $n$ edges, which is asymptotically $c_2 n^{-3/2} \delta^n$, where  $\delta = 1/\sigma \approx 6.40375$, and $\sigma$ is an algebraic number of degree 12. The proofs are based on combinatorial decompositions and singularity analysis. 
	Chordal planar graphs (or maps) are a natural example of a subcritical class of graphs in which the class of 3-connected graphs is relatively rich. 
	The 3-connected members are precisely chordal triangulations, those obtained starting from $K_4$ by repeatedly adding vertices adjacent to an existing triangular face.
\end{abstract}

%
%
\section{Introduction}

A graph is chordal if every cycle of length greater than three contains at least one chord, which is an edge connecting non-consecutive vertices of the cycle. 
Chordal graphs have been much studied in structural graph theory and graph algorithms (see for instance \cite{G80}), but much less from the point of view of enumeration. 
It is known that the asymptotic number of labelled chordal graphs with $n$ vertices is $\binom{n}{n/2}2^{n^2/4}$; an explanation for this estimate is that as $n$ goes to infinity almost all chordal graphs with $n$ vertices are split, that is, the vertex set can be partitioned into a clique and an independent set \cite{BRW85}. 
See also \cite{W85} for results on the exact counting of chordal labelled graphs. 

On the other hand, there has been much work on counting planar graphs and related classes of graphs since the seminal work by Giménez and Noy \cite{GN09}. 
Here we focus on planar graphs that are at the same time chordal. 
To count them we use, as in \cite{GN09}, the canonical decomposition of graphs into $k$-connected components for $k = 1, 2, 3$. 
The starting point is the enumeration of 3-connected chordal planar graphs: these are precisely the chordal triangulations, which when suitably rooted are in bijection with ternary trees. 
Then we use the decomposition of 2-connected graphs into 3-connected components. 
An important difference with the class of all planar graphs is that one cannot compose more than two graphs in series since otherwise a chordless cycle is created. 
A more significant difference is that the class of chordal planar graphs is \textit{subcritical} (a concept defined below), instead of being \textit{critical} as the class of all planar graphs: this is reflected by the polynomial term $n^{-5/2}$ of the asymptotic estimates for the number of graphs in the class \cite{DFKKR11}, as opposed to $n^{-7/2}$ for all planar graphs \cite{GN09}. 
Thus we have a natural example in which the class of 3-connected graphs is relatively rich, yet the class is subcritical.

Before stating our main results, we recall the formal definition of a subcritical class of graphs, which is based on properties of the associated generating functions. 
A class of graphs $\G$ is \emph{block-stable} if it has the property that a graph belongs to $\G$ if and only if each of its blocks belongs to $\G$.
Let $\G$ be a block-stable class of labelled graphs and denote the subclasses of connected graphs and $2$-connected graphs by $\C$ and $\B$, their associated exponential generating functions by $C(x)$ and $B(x)$ with radius of convergence  $\rho_b$ and $\rho_C$, respectively. 
The class $\G$ is said to be \emph{subcritical} \cite{DFKKR11} if 
\begin{equation}\label{eq:subcritical}
	\rho C'(\rho) < \rho_b.
\end{equation} 
This condition has important implications on the structure of a class of graphs. 
Intuitively, subcritical classes are ``tree-like'' in some sense \cite{GNR13} exhibited for instance by the fact that their scaling limit is the continuum random tree \cite{PSW16}, which means that the global structure is essentially determined by the block-decomposition tree, while the size of the blocks is bounded in expectation and at most logarithmic. 

Our first result is the following.

\begin{theorem}\label{th:graph}
Let $g_n$ be the number of labelled chordal planar graphs with $n$ vertices, $c_n$ those which are connected, and $b_n$ those which are 2-connected. 
Then as $n\to\infty$ we have 
\begin{enumerate}
	\item $g_n \sim g \cdot n^{-5/2} \gamma^n n!,
		\qquad \gamma \approx 11.89235, \ g\approx 0.00027205$
	\item $c_n \sim c \cdot n^{-5/2} \gamma^n n!, 
		\qquad c\approx 0.00027194$,
	\item $b_n \sim b \cdot n^{-5/2} \gamma_b^n n!,
		\qquad \gamma_b\approx 10.76897, \ b\approx 0.00016215$.
\end{enumerate}
\end{theorem}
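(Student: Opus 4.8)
The plan is to follow the classical symbolic-method pipeline of Gim\'enez--Noy, specialized to the chordal case, where the crucial simplification is that the class of 3-connected chordal planar graphs is exactly the class of chordal triangulations, which (suitably rooted) are in bijection with ternary trees. First I would set up the generating functions: let $T(x,y)$ record 3-connected chordal planar graphs (triangulations) by vertices and edges, obtained from the ternary-tree bijection, so that $T$ satisfies an explicit algebraic equation of the type $T = x\,\phi(T)$ with a cubic-degree $\phi$; its dominant singularity is a branch point of square-root type, contributing the $n^{-3/2}$ polynomial factor at the 3-connected level. The key structural input from the excerpt is that \emph{one cannot compose more than two graphs in series}, so the usual series/parallel network decomposition for $B(x)$ truncates: the series construction is no longer an arbitrary sequence but is capped at length two, which modifies the standard network equations and is what I would derive next.

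Next I would assemble $B(x)$, the EGF of 2-connected chordal planar graphs, from $T(x,y)$ via the decomposition of 2-connected graphs into 3-connected components (the network/$y$-substitution machinery: parallel and the truncated series compositions, together with the $3$-connected core evaluated at the appropriate edge-variable). Concretely, one writes a system relating $B$, the network generating function, and $T(x,D(x))$ where $D$ encodes the edge/pseudo-edge substitution; solving and taking the derivative conditions gives the singularity $\rho_b$ and the constant $\gamma_b = 1/\rho_b \approx 10.76897$. Then, following the standard block-decomposition, the connected EGF satisfies $C'(x) = \exp\!\bigl(B'(x\,C'(x))\bigr)$, and the full class satisfies $G(x) = \exp(C(x))$. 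Singularity analysis of these compositions yields $\gamma = 1/\rho_C$ for the connected/all-graphs count and the constants $g, c$.

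The main obstacle, and the heart of the argument, is establishing \emph{subcriticality}, i.e.\ verifying the strict inequality $\rho_C\,C'(\rho_C) < \rho_b$ from the excerpt's definition, which is exactly what forces the polynomial factor to be $n^{-5/2}$ rather than the $n^{-7/2}$ of general planar graphs. This requires showing that the singularity $\rho_C$ of $C$ comes from the tree-like composition $C'(x) = \exp(B'(xC'(x)))$ hitting a branch point (so that $y_0 := \rho_C C'(\rho_C)$ lies strictly inside the disk of convergence of $B'$), rather than from $xC'(x)$ reaching $\rho_b$. I would verify this by computing $\rho_b$ and the value $\rho_C C'(\rho_C)$ numerically from the explicit algebraic equations and checking the inequality is strict; the delicate point is that because the $3$-connected class is ``rich'' one must confirm that the truncated-series cap keeps $B'$ finite and smooth at the relevant argument. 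Once subcriticality holds, the transfer theorems of singularity analysis give a square-root singularity for each of $B$, $C$, $G$ and hence the common exponent $-5/2$, with the constants $b, c, g$ read off from the singular expansions.

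Finally I would extract the numerical constants: the growth rates $\gamma_b, \gamma$ are the reciprocals of the respective dominant singularities, determined as algebraic numbers from the resultant of the defining system and its derivative; the multiplicative constants $b, c, g$ follow from the coefficient in the square-root term of the singular expansion via the standard transfer $[x^n]\,(\rho-x)^{1/2} \sim$ a $\Gamma$-factor times $n^{-3/2}\rho^{-n}$, upgraded to $n^{-5/2}$ by the outer $\exp$ compositions. The relations $c_n \sim c\cdot n^{-5/2}\gamma^n n!$ and $g_n \sim g\cdot n^{-5/2}\gamma^n n!$ share the same $\gamma$ because $G = \exp(C)$ adds only an analytic factor and does not move the singularity, so the two constants $g$ and $c$ are close but distinct, consistent with the stated values $g\approx 0.00027205$ and $c\approx 0.00027194$.
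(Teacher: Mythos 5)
Your outline follows the same pipeline as the paper: ternary trees for the $3$-connected level, a network decomposition with the series construction capped at length two, the block decomposition $C'(x)=\exp\bigl(B'(xC'(x))\bigr)$ and $G=e^C$, and singularity analysis throughout. Two points, however, deserve attention. First, the step you gloss over as ``assemble $B(x)$ from the network machinery'' hides the one genuinely nontrivial lemma at the $2$-connected level: the network series $E(x,y)$ only gives you $B_y(x,y)=\tfrac{x^2}{2y}E(x,y)$, i.e.\ the \emph{edge-rooted} series, and the theorem's constants require the unrooted $B$ itself (and $B(\tau)$, $B'(\tau)$, $B''(\tau)$, $B'''(\tau)$ to locate $\rho$ and compute $c$ and $g$). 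The paper recovers $B$ without analytic integration by applying the dissymmetry theorem twice --- once to the tree of $4$-cliques of a chordal triangulation to get the unrooted $U(z)=\tfrac{z^3}{24}(S-S^2)$, and once to the tree of series/triangulation components of a $2$-connected graph to get the closed form \eqref{eq:BE}. Your proposal never addresses this unrooting, and without it (or an equivalent integration of the singular expansion in $y$, as in Gim\'enez--Noy) you obtain the growth rates $\gamma_b,\gamma$ but not the constants $b$, $c$, $g$ that the statement asserts.

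Second, on subcriticality of the block composition: your plan to check $\rho C'(\rho)<\rho_b$ numerically would work, but the paper's argument is cleaner and worth internalizing --- since $B$ has singular exponent $3/2$ at $\rho_b$, $B''(x)\to\infty$ as $x\to\rho_b$, so the branch-point equation $\tau B''(\tau)=1$ automatically has a solution $\tau<\rho_b$; no numerics are needed for the strict inequality. Relatedly, your attribution of the exponent $-5/2$ to this block-level subcriticality is only right for $c_n$ and $g_n$; for $b_n$ the exponent comes from the subcriticality of the inner composition $S(xE^3)$ (the paper checks $\rho_bE_0^3\approx 0.14665<4/27$), which is where the ``richness'' of the $3$-connected class actually enters.
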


\noindent
We can add to the previous estimates the formula (see \cite{BP71}) for the number $t_n$ of 3-connected labelled chordal graphs
\begin{equation}\label{eq:tn}
	t_n = \binom{n}{3}\frac{(3n-9)!}{(2n-4)!}\approx t \cdot n^{-5/2}(27/4)^nn!, 
 	\qquad t = \frac{4\sqrt{3}}{3^{10}\sqrt{\pi}}.
\end{equation}
A formula which will resurface later. 

As a corollary of Theorem \ref{th:graph} the limiting probability that a random labelled planar chordal graph (with the uniform distribution on graphs with $n$ vertices) is connected tends to $p = c/g \approx 0.99963$ as $n\to\infty$. 
In fact it is straightforward to show \cite{GNR13} that the number of connected components is asymptotically distributed as $1 + X$, where $X$ is a Poisson law with parameter $C_0 \approx 0.00037470$, a value computed at the end of Section~\ref{sec:proof1}, so that $p = e^{-C_0}$.

Our second result is about rooted maps. 
A rooted map is a connected planar multigraph with a fixed embedding in the plane in which an edge (the \textit{root edge}) is distinguished and directed. 
Rooted maps where first enumerated by Tutte \cite{T63} and have been since then the object of much study (see \cite{S15} for definitions on maps and an overview on their enumeration). 
We only consider simple maps (those with no loop or multiple edge) since they are the natural objects with respect to the property of being chordal. 

\begin{theorem}\label{th:map}
Let $M_n$ be the number of rooted chordal simple planar maps with $n$ edges, and $B_n$ those which are 2-connected.
Then as $n\to\infty$ we have 
\begin{enumerate}
	\item $B_n  \sim b\cdot n^{-3/2}\cdot \sigma_b^{-n}, 
	\qquad\text{with } b\approx 0.071674 
	\text{ and } \sigma_b^{-1}\approx 3.65370$,

	\item $M_n \sim m\cdot n^{-3/2}\cdot \sigma^{-n}, 
	\qquad\text{with } m\approx 0.12596
	\text{ and } \sigma^{-1}\approx 6.40375$.
\end{enumerate}
\end{theorem}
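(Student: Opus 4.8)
The plan is to reproduce for rooted maps the three-layer decomposition (3-connected into 2-connected into connected) that underlies Theorem~\ref{th:graph}, the crucial simplification being that rooting kills all automorphisms, so that we may work throughout with ordinary generating functions, count directly by the number of edges, and avoid the symmetry and labelling corrections needed in the graph case. Thus $B_n$ and $M_n$ will be the coefficients $[x^n]B(x)$ and $[x^n]M(x)$ of two algebraic generating functions to be determined.

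The starting layer is already in hand: the 3-connected chordal planar maps are exactly the chordal triangulations, and, suitably rooted, they are in bijection with ternary trees, so their edge-generating function $T(x)$ is algebraic of ternary-tree type, with a square-root singularity whose growth constant is governed by the $27/4$ appearing in~\eqref{eq:tn}. I would then build the 2-connected layer through the network decomposition: let $D(x)$ be the generating function of networks (maps with two poles whose closure by a pole edge is 2-connected), parsed uniquely into the trivial edge and the series, parallel and polyhedral types, the last obtained by substituting networks into the non-root edges of the cores $T$. Two features specific to this paper must be encoded in the system. First, chordality forbids chaining more than two networks in series, since a longer series path closes into a chordless cycle; hence the series term is \emph{binary} rather than an unrestricted sequence. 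Second, simplicity forbids a double edge, which caps the number of edge-factors allowed in a parallel composition. Solving the resulting polynomial system gives $B(x)$ as an algebraic function, and eliminating the auxiliary (series, parallel, $T$) unknowns is what I expect to produce the degree-$12$ minimal polynomial of $\sigma$.

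For the connected layer I would use the block decomposition at the root: the block carrying the root edge is a 2-connected map, at whose vertices further rooted connected maps are substituted, yielding an implicit equation expressing $M(x)$ through $B$ and $M$. The asymptotics then follow from singularity analysis of algebraic functions. For $B$, provided the network system is strongly connected and aperiodic, the Drmota--Lalley--Woods theorem gives a common dominant singularity $\sigma_b$ of square-root type, whence $B_n \sim b\,n^{-3/2}\sigma_b^{-n}$. For $M$, the decisive structural input is \emph{subcriticality} \cite{DFKKR11}: one must check that at its dominant singularity $\sigma$ the argument fed into $B$ stays strictly inside the disk of convergence of $B$, so that $\sigma$ is a square-root branch point created by the composition itself (the smooth implicit-function schema) rather than an image of $\sigma_b$; the transfer theorem then yields $M_n \sim m\,n^{-3/2}\sigma^{-n}$ with $\sigma < \sigma_b$, and the constants $b,m$ are read off from the singular expansions.

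I expect the main obstacle to be twofold. The first difficulty is the constraint bookkeeping: proving that the binary-series restriction captures chordality \emph{exactly}, i.e. that no chordless cycle can arise through the nesting of otherwise-chordal networks, and simultaneously tracking the simplicity condition through series, parallel and polyhedral substitutions without over- or under-counting. The second is the singularity verification proper: confirming that all dominant singularities are genuinely of square-root type (no degeneration of the Newton polygon, no hidden periodicity) and that subcriticality holds in the map setting, which together pin down the exponent $-3/2$ in both statements and justify the extraction of the numerical constants.
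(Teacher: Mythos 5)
Your plan follows essentially the same route as the paper: ternary trees for the 3-connected chordal triangulations, a network-style decomposition of 2-connected maps with the series composition restricted to length two by chordality (the paper organizes the parallel components as a \emph{sequence}, using the linear order provided by the embedding), algebraic elimination followed by discriminant and Newton-polygon analysis to establish the square-root singularities, and the subcritical composition $M(z)=B\left(z(1+M(z))^2\right)$ for general maps. The only slips are cosmetic and do not affect soundness: the submaps are attached at the corners of the 2-connected core rather than at its vertices, and the degree-12 algebraic number $\sigma$ arises from the elimination at the level of all maps (the 2-connected level yields only a degree-9 equation for $B$).
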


\noindent
The proof is again based on the structure of 3-connected chordal maps. 
As opposed to the class of general maps, the class of simple chordal maps is again subcritical. 
This is reflected in the term $n^{-3/2}$ instead of the usual $n^{-5/2}$ for classes of planar maps. 
Other natural subcritical classes are outerplanar maps \cite{GN17} and series-parallel maps \cite{DNS21}, but these two classes do not contain 3-connected graphs.

For classes of labelled graphs we use exponential generating functions $\sum g_nx^n/n!$, where $g_n$ is the number of graphs in the class with $n$ vertices. 
For classes of rooted maps we use instead ordinary generating functions $\sum M_n z^n$, where $M_n$ in the number of maps in the class with $n$ edges. 
We say that the variable $x$ marks vertices and $z$ marks edges. 
We use the tools of analytic combinatorics \cite{FS09} to obtain the estimates in Theorems \ref{th:graph} and \ref{th:map}, in particular transfer theorems for obtaining asymptotic estimates of functions with algebraic singularities.
\begin{lemma}\label{lem:transfer}
Assume that $f(z)$ has radius of convergence $\rho > 0$ and admits an analytic continuation to an open domain of the form
\begin{equation*}
	\Delta(R,\phi) = \{ z \colon |z|<R, z \ne \rho, |\arg(z-\rho)| > \phi \}
	\qquad\text{for some } R > \rho \text{ and } 0 < \phi < \pi/2.
\end{equation*}
Further assume that $f(z)$ verifies, when $z\sim \rho$ such that $z\in \Delta(\phi,R)$,
\begin{equation*}
	f(z) \sim c\cdot \left( 1 - \frac{z}{\rho} \right)^{-\alpha}
	\qquad\text{for some } c > 0 \text{ and } \alpha \notin \{0,-1,-2,\dots\}.
\end{equation*}
Then 
the coefficients of $f(z)$ satisfy
\begin{equation*}
	[z^n]f(z) \sim \frac{c}{\Gamma(\alpha)} n^{\alpha-1} \rho^{-n}
	\qquad \hbox{ as } n \to \infty.
\end{equation*}
\end{lemma}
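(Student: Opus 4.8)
The plan is to reduce the statement to the single benchmark function $(1-z)^{-\alpha}$ and then transfer the asymptotics through a Cauchy (Hankel) contour argument. First I would normalize the location of the singularity: setting $\tilde f(z) = f(\rho z)$ gives $[z^n] f(z) = \rho^{-n}\,[z^n]\tilde f(z)$, while $\tilde f$ satisfies the same hypotheses with $\rho$ replaced by $1$ (the domain $\Delta(R,\phi)$ simply rescales to $\Delta(R/\rho,\phi)$). Hence it suffices to treat the case $\rho = 1$ and reinstate the factor $\rho^{-n}$ at the very end.

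Second I would dispose of the exact benchmark. For $\alpha \notin \{0,-1,-2,\dots\}$ the generalized binomial theorem gives $[z^n](1-z)^{-\alpha} = \binom{n+\alpha-1}{n} = \Gamma(n+\alpha)\big/\big(\Gamma(\alpha)\,\Gamma(n+1)\big)$, and the classical ratio estimate $\Gamma(n+\alpha)/\Gamma(n+1) \sim n^{\alpha-1}$, which follows from Stirling's formula, yields $[z^n](1-z)^{-\alpha} \sim n^{\alpha-1}/\Gamma(\alpha)$. This already produces the main term $c\,n^{\alpha-1}/\Gamma(\alpha)$; everything else is an error analysis.

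Third, and this is the heart of the matter, I would transfer the error. Writing $h(z) = f(z) - c(1-z)^{-\alpha}$, the hypothesis $f(z)\sim c(1-z)^{-\alpha}$ says exactly that $h(z) = o\big((1-z)^{-\alpha}\big)$ as $z\to 1$ in $\Delta$, and it remains to prove $[z^n] h(z) = o(n^{\alpha-1})$. By Cauchy's formula $[z^n] h(z) = \frac{1}{2\pi i}\oint h(z)\,z^{-n-1}\,dz$, and using the analytic continuation of $h$ to $\Delta(R,\phi)\setminus\{1\}$ I would deform the circle of integration to a Hankel-type contour made of a small circle of radius $1/n$ around $z=1$, two rectilinear segments leaving $z=1$ at an angle $\psi$ with $\phi < \psi < \pi/2$, and an outer arc of fixed radius $R'$ with $1 < R' < R$. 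The angle condition is precisely what guarantees that this contour stays inside $\Delta$, since the excluded forward cone is $\{|\arg(z-1)| \le \phi\}$. On the outer arc $h$ is bounded while $|z^{-n-1}| \le (R')^{-n-1}$ decays geometrically, contributing $O((R')^{-n}) = o(n^{\alpha-1})$. On the inner portion I would substitute $z = 1 + u/n$, so that $z^{-n-1} = e^{-u}(1+o(1))$ and the part of the contour with $|z-1|\le \delta$ maps, as $n\to\infty$, onto the full Hankel contour $\mathcal H$; writing $|h(z)| = \varepsilon(z)\,|1-z|^{-\operatorname{Re}\alpha}$ with $\varepsilon(z)\to 0$, and choosing $\delta$ so that $\varepsilon(z)<\eta$ for $|z-1|<\delta$, this range is bounded by $\eta\, n^{\alpha-1}$ times the convergent integral $\frac{1}{2\pi i}\int_{\mathcal H} e^{u} u^{-\alpha}\,du = 1/\Gamma(\alpha)$, while the complementary range $|z-1|\ge\delta$ is again geometrically small since there $|z|$ is bounded away from $1$. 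Letting $n\to\infty$ and then $\eta\to 0$ gives $[z^n] h(z) = o(n^{\alpha-1})$, which combined with the benchmark estimate finishes the proof.

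I expect the main obstacle to be the uniformity in the inner estimate: the hypothesis only controls $h$ pointwise as $z\to 1$, so one must split the indented contour into the regime scaling like $1/n$ (where the $o(\cdot)$ bound is applied uniformly and yields the genuine $n^{\alpha-1}$ order) and the intermediate-to-far regime (handled by geometric decay of $z^{-n}$), and verify throughout that the deformed contour truly lies in $\Delta(R,\phi)$. The remainder is bookkeeping with Stirling's formula and the standard value of the Hankel integral. I note finally that this is a classical transfer theorem and that a fully detailed proof can be found in \cite{FS09}; the argument above is the route I would reproduce.
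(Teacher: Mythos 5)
Your proposal is correct: it reproduces the classical Flajolet--Odlyzko transfer argument (exact binomial asymptotics for the benchmark $(1-z)^{-\alpha}$ plus a Hankel-type contour deformation inside $\Delta$ to show the error contributes $o(n^{\alpha-1})$), which is exactly the proof in \cite{FS09} that the paper invokes; the paper itself states this lemma without proof as a standard tool. The only cosmetic imprecision is that in the error estimate the inner range should be bounded by $\eta\, n^{\operatorname{Re}\alpha-1}$ times the \emph{absolute} convergent integral $\int_{\mathcal H} |e^{u}|\,|u|^{-\operatorname{Re}\alpha}\,|du|$ rather than by the signed Hankel integral evaluating to $1/\Gamma(\alpha)$, but this does not affect the conclusion.
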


Finally we need the so-called disymmetry theorem for enumerating classes of graphs encoded by trees as presented in \cite{CFKS08}.
A class of graphs ${\cal A}$ is said to be {\em tree-decomposable} if for each graph $\gamma\in {\cal A}$ we can associate in a unique way a tree $\tau(\gamma)$ whose nodes are distinguishable (for instance, by using the labels).
Let ${\cal A}_{\bullet}$ denote the class of graphs in ${\cal A}$ where a node of $\tau(\gamma)$ is distinguished.
Similarly, ${\cal A}_{\bullet - \bullet}$ is the class of graphs in ${\cal A}$ where an edge of $\tau(\gamma)$ is distinguished, and ${\cal A}_{\bullet \rightarrow \bullet}$ those where an edge $\tau(\gamma)$ is distinguished and given a direction.
The dissymmetry theorem  allows us to express the class of unrooted graphs in $\A$  in terms of the rooted classes.

\begin{lemma}\label{lem:species}
	Let ${\cal A}$ be a tree-decomposable class.
	Then
	$$
		{\cal A} + {\cal A}_{\bullet \rightarrow \bullet} \simeq {\cal A}_{\bullet} + {\cal A}_{\bullet - \bullet},
	$$
	where $\simeq$ is a bijection preserving the number of nodes.
\end{lemma}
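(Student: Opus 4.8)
The plan is to prove the identity bijectively at the level of the tree $\tau(\gamma)$ attached to each $\gamma\in\mathcal{A}$. Note that all four classes $\mathcal{A}$, $\mathcal{A}_{\bullet}$, $\mathcal{A}_{\bullet-\bullet}$, $\mathcal{A}_{\bullet\to\bullet}$ consist of the same underlying graphs and differ only in whether, and how, a node, an edge, or a directed edge of $\tau(\gamma)$ is marked; since the nodes of $\tau(\gamma)$ are distinguishable, each such marking is unambiguous. Hence any map that merely relocates the marking while leaving $\gamma$ (and therefore the number of nodes of $\tau(\gamma)$) untouched automatically respects the node count, and it suffices to exhibit, working tree by tree, a bijection
\begin{equation*}
	\Phi\colon\ \mathcal{A}_{\bullet} + \mathcal{A}_{\bullet-\bullet}\ \longrightarrow\ \mathcal{A} + \mathcal{A}_{\bullet\to\bullet}
\end{equation*}
that fixes the underlying graph. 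The cornerstone is the classical fact (Jordan) that every finite tree $\tau$ has a canonical \emph{center}, obtained by iteratively deleting all leaves until either a single node or a single edge remains; thus $\tau$ has either a \emph{center node} or a \emph{center edge}, but never both. I would first record this dichotomy, as it supplies the label-independent reference point around which $\Phi$ is organized.

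Next I would define $\Phi$ by \emph{pushing the mark toward the center}. For a node-marked structure $(\gamma,v)$: if $v$ is the center node, output $\gamma\in\mathcal{A}$; otherwise let $w$ be the neighbor of $v$ on the unique path from $v$ to the center (and, in the case that the center is an edge $\{x,y\}$ with $v$ one of its endpoints, take $w$ to be the other endpoint), and output $\gamma$ carrying the directed edge $v\to w\in\mathcal{A}_{\bullet\to\bullet}$. For an edge-marked structure $(\gamma,\{a,b\})$: if $\{a,b\}$ is the center edge, output $\gamma\in\mathcal{A}$; otherwise exactly one endpoint, say $a$, lies on the center side after $\{a,b\}$ is deleted, and output $\gamma$ carrying the directed edge $a\to b$, now oriented \emph{away} from the center.

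Finally I would check that $\Phi$ is a bijection by reconstructing the preimage from the orientation of the marked directed edge relative to the center. The class $\mathcal{A}$ is hit exactly once, since each tree contributes its center node if it has one and its center edge otherwise, and these two cases are mutually exclusive. For $\mathcal{A}_{\bullet\to\bullet}$, a directed edge $x\to y$ whose head $y$ lies on the center side comes uniquely from the node-marked $(\gamma,x)$, one whose tail $x$ lies on the center side comes uniquely from the edge-marked $(\gamma,\{x,y\})$, and the two orientations of the center edge itself arise from its two endpoints through the node-marked case. I expect the main obstacle to be exactly this orientation bookkeeping near the center—especially the center-edge case—where one must verify that marking its two endpoints as nodes produces precisely its two directed versions while the \emph{undirected} center edge alone maps into $\mathcal{A}$, so that no directed edge is generated twice or left out. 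As $\Phi$ changes only the marking and never the graph $\gamma$, it keeps $\tau(\gamma)$ and hence its number of nodes fixed, giving exactly the node-preserving bijection asserted.
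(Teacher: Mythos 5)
Your argument is correct and complete. Note that the paper does not actually prove Lemma~\ref{lem:species}: it is quoted as the dissymmetry theorem with a citation to \cite{CFKS08}, so there is no in-paper proof to compare against. What you give is precisely the standard proof from the species literature: use the Jordan center (a canonical node or a canonical edge, never both) as a label-free reference point, send the marked center node or marked center edge to the unmarked tree, and convert every other marked node or marked edge into a directed edge whose orientation relative to the center (toward it for node marks, away from it for edge marks) records which summand it came from. Your case analysis near the center edge is the one delicate point and you handle it correctly: the undirected center edge goes to ${\cal A}$, while its two directed versions are produced by marking its two endpoints, so nothing is duplicated or omitted. Since the underlying graph $\gamma$ and hence $\tau(\gamma)$ are never altered, the bijection preserves the number of nodes as required.
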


\noindent
If follows that we can write the series $A_\bullet$ of $\A_\bullet$ in terms of the series of the other classes. 
If the encoding trees have no adjacent nodes of the same type (as in our applications) then ${\cal A}_{\bullet \rightarrow \bullet}$ is twice $ {\cal A}_{\bullet - \bullet}$ and we have  
$$
	A_\bullet(z) = A_\bullet(z) -  {\A}_{\bullet - \bullet}(z).
$$

In Section \ref{sec:gf} we analyse the combinatorial structure of chordal planar graphs according to their connectivity and deduce functional equations satisfied by the associated generating functions. 
Sections \ref{sec:proof1} and \ref{sec:proof2} are devoted to the asymptotic analysis and to the proof of our main results.

%
%
\section{Generating functions of chordal planar graphs}\label{sec:gf}

\subsection{3-connected graphs}

Chordal planar graphs that are 3-connected are in fact chordal triangulations (also called stacked triangulations): they are obtained from $K_4$ by repeatedly adding a vertex in the interior of an existing triangular face and making it adjacent to the three vertices in the boundary. 
This is because chordal graphs admit a perfect elimination ordering, so that when adding a new vertex one has to make it adjacent to exactly three existing vertices in roder to preserve 3-connectedness and planarity. 

Let  $T(z)$ be the generating function of labelled chordal triangulations rooted at a directed edge, where $z$ marks the number of vertices minus 2. 
It is clear that 3-connected chordal maps rooted at a triangle are in bijection with planted ternary trees rooted at a leaf.
The generating function of ternary trees rooted at a leaf, where $z$ marks internal nodes, is given by
\begin{align}\label{eq:S}
	S(z) & = z(1+S(z))^3.
\end{align}
According to the bijection, we have
\begin{equation}\label{eq:T} 
	T(z) = \frac{zS(z)}{2}.
\end{equation}
The division by 2 is because in a map we have two choices for the root face, to the left or to the right of the root edge. 
It is well known that $[z^n]S(z) = \frac{1}{2n+1}\binom{3n}{n}$, from which \eqref{eq:tn} follows. 

Later we need the unrooted version of $T(z)$. 
It can be computed by algebraic integration of $S(z)$ but we choose to use the dissymmetry theorem on ternary trees since the proof is more combinatorial and it is used again later. 

\begin{lemma}\label{lem:integration_U} 
Let $U(z)$ be the generating function counting (unrooted) labelled chordal triangulations, where $z$ marks all  vertices.
Then 
\begin{equation}\label{eq:U}
	U(z) = \frac{z^3}{24}(S(z) - S(z)^2). 
\end{equation}
\end{lemma}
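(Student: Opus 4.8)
The plan is to present (unrooted) chordal triangulations as a tree-decomposable class and apply the dissymmetry theorem (Lemma~\ref{lem:species}), so that $U(z)$ is expressed through pointed classes that are directly computable from the ternary-tree series $S(z)$ of \eqref{eq:S}. The guiding principle is that pointing a node or an edge of the encoding tree destroys all automorphisms, so each pointed class is enumerated by a \emph{rooted} ternary structure with no symmetry correction; the unrooted series is then recovered as an alternating sum.

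First I would fix the canonical tree $\tau(G)$ attached to a chordal triangulation $G$. Refining the ternary-tree encoding used for $T(z)$ in \eqref{eq:T}, one takes a bipartite tree alternating between \emph{vertex-nodes} (the vertices introduced by the stacking construction) and \emph{triangle-nodes} (the triangles of $G$, i.e.\ its faces together with its separating triangles): a vertex-node is joined to the triangle it is stacked into and to the three triangles it thereby creates. Since the two kinds of nodes strictly alternate along every edge, $\tau(G)$ has no two adjacent nodes of the same type, which is exactly the hypothesis under which $\tau_{\bullet\rightarrow\bullet}=2\,\tau_{\bullet-\bullet}$, giving the usable form $U = U_{\bullet} - U_{\bullet-\bullet}$, with $U_\bullet$ splitting as the sum of a vertex-pointed and a triangle-pointed series.

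Next I would compute the three pointed series in terms of $S$. A distinguished triangle separates the sphere into two regions, each filled by a stacked triangulation and hence enumerated by $1+S$ (the empty region accounting for a face), which yields the triangle-pointed contribution; a distinguished vertex sees three stacked sectors, yielding the vertex-pointed contribution; and a distinguished (vertex, triangle) incidence yields the edge term. The factor $z^3$ enters because the three vertices of the distinguished root triangle are not among the stacking vertices tracked by $S$. Substituting these expressions into $U=U_\bullet-U_{\bullet-\bullet}$ and simplifying throughout with the defining relation $S=z(1+S)^3$ should collapse the whole expression to $\tfrac{z^3}{24}\bigl(S-S^2\bigr)$.

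The main obstacle is the bookkeeping of the base case and of symmetry, together with the cancellation producing $S-S^2$. On the side of $S$ the objects are rooted (no automorphism weighting), whereas $U$ is the exponential series $\sum_n t_n z^n/n!$, i.e.\ unlabelled objects weighted by $1/|\mathrm{Aut}|$; the delicate point is that the single $K_4$ must enter with weight $1/|\mathrm{Aut}(K_4)|=1/24$, which is the origin of the constant $\tfrac{1}{24}$ and is consistent with $[z^4]U=1/24$. One must check that the side-choices of the root triangle and the unordered pair of regions around it are each accounted for exactly once, so that this is the only symmetry factor surviving. The second subtle point is that the dissymmetry cancellation must deliver precisely the combination $S-S^2$, and not merely a series with the same dominant behaviour: since $S=\tfrac12$ at the singularity $z=4/27$, the derivative of $S-S^2$ vanishes there, so $S-S^2$ carries a singularity of type $(1-z/\rho)^{3/2}$ rather than $(1-z/\rho)^{1/2}$, which is exactly what yields the subcritical exponent $n^{-5/2}$ via Lemma~\ref{lem:transfer}; verifying this exact form is the crux. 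As an independent check one can instead integrate: $zU'(z)$ is the vertex-pointed series, computable directly from $S$, and integrating it while fixing the constant by the $K_4$ term recovers the same closed form, as anticipated in the remark preceding the statement.
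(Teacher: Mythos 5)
Your high-level strategy is the same as the paper's (encode a chordal triangulation by a canonical tree and apply the dissymmetry theorem, Lemma~\ref{lem:species}, computing the pointed classes in terms of $S$), but the specific encoding you propose does not work, and the computation that would constitute the actual proof is left as ``should collapse''. First, the set of ``vertices introduced by the stacking construction'' is not canonical: already for $n=5$ (stack $v$ into face $abc$ of $K_4$ on $\{a,b,c,d\}$) both $v$ and $d$ are simplicial of degree $3$ and play interchangeable roles, so which one is ``the stacked vertex'' depends on the chosen construction order. Hence $\tau(G)$ is not uniquely associated to $G$, which is exactly the requirement of tree-decomposability. Second, even after fixing a construction order, the incidence structure you describe is not a tree but a forest: it has $(3n-8)+(n-4)=4n-12$ nodes and $4(n-4)=4n-16$ edges (for $K_4$ itself it is four isolated triangle-nodes), because the faces never stacked into are leaves or isolated nodes and the four original faces of the initial $K_4$ are not joined to anything below them. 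The dissymmetry theorem does not apply to forests, so the identity $U=U_\bullet-U_{\bullet-\bullet}$ you want to use is not available for this encoding.

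The paper avoids both problems by taking as encoding tree the (canonical) tree of $4$-cliques, two of them adjacent when they share a triangle; node labels make all nodes distinguishable, giving $A^{\bullet\to\bullet}=2A^{\bullet-\bullet}$. The two pointed series are then computed explicitly: $A^{\bullet}=\frac{z^4}{24}(1+S)^4=\frac{z^3}{24}S(1+S)$ (fix the four labelled vertices of the root $4$-clique and attach a possibly empty triangulation to each of its four triangles, using $S=z(1+S)^3$) and $A^{\bullet-\bullet}=\frac{z^3}{12}S^2$ (fix the three vertices of the shared triangle and attach an unordered pair of nonempty triangulations), whence $U=\frac{z^3}{24}S(1+S)-\frac{z^3}{12}S^2=\frac{z^3}{24}(S-S^2)$. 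Your observations about the factor $1/24$ coming from $[z^4]U=1/24$ and about the alternative route by integrating the vertex-pointed series are correct, but they do not substitute for a valid tree encoding and the explicit evaluation of the pointed classes. If you want to keep a bipartite tree, the workable choice is $4$-cliques versus \emph{separating} triangles only ($n-3$ plus $n-4$ nodes, $2n-8$ edges), which is genuinely a tree and reproduces the same computation.
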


\begin{proof}
Labelled chordal triangulations with $n$ vertices are in bijection with the class of trees with $n-3$ nodes endowed with a labelling that satisfies the following:
\begin{enumerate}
	\item every node is labelled with a subset of size $4$ of $\{1, \dots, n\}$;

	\item the intersection of the label of a node with the labels of its neighbours has size $3$ and the intersection is different for each neighbour. In particular, every node has degree at most $4$;

	\item the graph induced by the nodes whose label contains a given $i\in\{1, \dots, n\}$ is connected.
\end{enumerate}
Indeed, the nodes of the tree correspond to the 4-cliques in the triangulation, their labels correspond to the labels of the four vertices of the 4-clique, and adjacent nodes correspond to two 4-cliques glued through a triangle. 
From this, it is straigthforward to verify that the three properties of the labelling given above are necessary and sufficient.
Therefore, the generating function of the encoding trees, counted by their number of nodes plus 3, is the generating function of labelled chordal triangulations. 
We will use the dissymmetry theorem, i.e. Lemma \ref{lem:species}, on the trees. 
We denote by $A$, $A^{\bullet}$, $A^{\bullet - \bullet}$ and $A^{\bullet \rightarrow \bullet}$ the generating functions of $\mathcal{A}$, $\mathcal{A}^{\bullet}$, $\mathcal{A}^{\bullet - \bullet}$ and $\mathcal{A}^{\bullet \rightarrow \bullet}$, respectively.
Note that since all nodes have different labels they are distinguishable. 
And hence $A^{\bullet \rightarrow \bullet} = 2A^{\bullet - \bullet}$.
    
Rooting a tree at a node corresponds to rooting a chordal triangulation at a 4-clique.
We fix the four vertices of the clique, and then at each triangle we attach a (possibly empty) chordal triangulation.
This gives
\begin{equation*}
    A^{\bullet} = \frac{z^4}{24}\left( 1 + S(z) \right)^4 = \frac{z^3}{24}S(z)\left( 1 + S(z) \right).
\end{equation*}
Rooting at an edge corresponds to rooting a chordal triangulation at a triangle shared by two 4-cliques. 
We fix the three vertices of the clique and then attach two chordal triangulations rooted at it. 
Taking into account symmetries this gives
\begin{equation*}
    A^{\bullet - \bullet} = \frac{z^3}{12}S(z)^2.
\end{equation*}
Finally, we have
\begin{equation*}
    U(z) = A 
    = A^{\bullet} - A^{\bullet - \bullet} 
    = \frac{z^3}{24}S(z)\left( 1 + S(z) \right) - \frac{z^3}{12}S(z)^2 
    = \frac{z^3}{24}(S(z) - S(z)^2).
\end{equation*}
\end{proof}

\subsection{2-connected graphs}

First we consider \emph{networks}, that are 2-connected labelled chordal planar graphs rooted at a directed edge $e$ so that the endpoints of $e$ are not labelled. 
Let $E(x,y)$ be the generating function of networks, where $x$ marks vertices as before and $y$ marks edges. 
The relation between $E(x,y)$ and the corresponding generating function $B(x,y)$ of 2-connected graphs is then
\begin{equation}\label{eq:BEint}
	E(x,y)= \frac{2y}{x^2}B_y(x,y).
\end{equation}

\begin{lemma}\label{lem:decomposition_E} 
The following equation holds:
	\begin{equation}\label{eq:E}
		E(x,y) = y \exp\left(xE(x,y)^2 + \frac{T\left(xE(x,y)^3\right)}{E(x,y)}\right).
	\end{equation}
\end{lemma}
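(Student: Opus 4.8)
The plan is to decompose networks according to the structure around the root edge $e=uv$, using the theory of 3-connected components (the SPQR-style decomposition) adapted to the chordal setting. The key observation, emphasized in the introduction, is that in a chordal graph one \emph{cannot} compose more than two networks in series, because three or more networks glued in series along a path would create a chordless cycle of length at least four. This single constraint is what makes the series contribution so simple and is the structural heart of the argument.

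First I would classify a network by the type of its root edge. The edge $uv$ either is a genuine edge of the graph or not, and the remainder decomposes into parallel, series, and ``3-connected-core'' (polyhedral/h-type) parts. The overall multiplicative factor of $y$ in \eqref{eq:E} accounts for the possible presence of the root edge itself. Inside the exponential I expect two contributions, matching the two summands. The \emph{parallel-type} term should produce $xE(x,y)^2$: here a new vertex (marked by $x$) is adjacent to both $u$ and $v$, and two networks (each contributing a factor $E$) hang off the two resulting edges; since a set of such parallel gadgets can be attached independently around the edge $uv$, the $\exp$ bundles an arbitrary set of them, which is precisely why we exponentiate. The \emph{3-connected-type} term should produce $T(xE^3)/E$: a 3-connected chordal core (a chordal triangulation, counted by $T$) sits between $u$ and $v$, and along each of its edges we substitute a network. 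The substitution $z \mapsto xE(x,y)^3$ reflects that $T$ marks vertices-minus-two and that each edge of the triangulation is replaced by a network, while the $1/E$ corrects for the root edge of $T$ already being carried by the network at $uv$.

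The main obstacle — and the step deserving the most care — is pinning down the exact substitution argument for the 3-connected term and verifying the precise exponents and the $1/E$ correction. I would argue this by recalling from the previous subsection that $T(z)=zS(z)/2$ counts triangulations rooted at a directed edge with $z$ marking vertices minus $2$, then tracking how the two root vertices $u,v$ are \emph{unlabelled} in a network (so they contribute no $x$), how each of the remaining triangulation vertices pulls in a factor of $x$, and how every edge of the triangulation (there are, by Euler's formula for triangulations, exactly three times the number of internal vertices worth of edges available for substitution, hence the cube) is replaced by a network carrying a factor $E$. Matching the variable substitution $z = xE^3$ to these counts, and checking that the root edge of the triangulation is shared with the root edge $uv$ of the network being built (forcing the division by $E$ to avoid double-counting that edge's network), gives the second summand.

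Finally, I would justify the passage from a \emph{set} of these gadgets to the exponential: because the gadgets attached at the root edge are distinguishable only through their labels and can be freely permuted, the exponential formula of labelled enumeration applies, turning the sum over an unordered collection of parallel and 3-connected pieces into $\exp$ of the sum of the two single-gadget series. Assembling the outer factor $y$ (for the root edge itself) with this exponential yields \eqref{eq:E}. The chordality constraint enters precisely in forbidding longer series chains, ensuring no additional summand of the form $E^2/(1-\cdots)$ appears inside the exponential.
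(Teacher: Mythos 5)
Your decomposition is essentially the paper's own proof: both invoke the classical parallel/series/3-connected network decomposition, use chordality to cap series compositions at exactly two networks, identify the 3-connected components as chordal triangulations counted by $T$, and justify the substitution $z=xE^3$ and the division by $E$ by the same vertex/edge count (a triangulation with $k+2$ vertices has $3k$ edges, of which the root edge is not replaced). The one loose phrase is ``possible presence of the root edge'': the factor $y$ outside the exponential means the root edge is \emph{always} present (consistent with the paper's definition of a network as rooted at an actual edge), but this does not affect the correctness of your argument.
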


\begin{proof}
Following the classical decomposition \cite{GNR13}, networks are parallel compositions of series compositions and  3-connected components.
A 3-connected component in a network is a chordal triangulation rooted at an edge, in which every edge except the root edge is replaced by a network.
Furthermore, at most two networks can be composed in series, as otherwise there would be an induced cycle of length at least 4.
Thus, a series composition can be encoded as a triangle rooted at an edge, in which each non-root edge is replaced by a network. 
The factor $y$ on the right-hand side of \eqref{eq:E} encodes the root edge, the exponential encodes the set (possibly empty) of parallel netwoks, the term $xE(x,y)^2$ encodes the series composition of exactly two networks, and the term $T(xE^3)/E$  the replacement of edges in a triangulation $t$. 
The term $xE^3$ stems from the fact that we keep the vertices of $t$ and in addtion if $t$ has $n-2$ vertices then it has exactly $3n$ edges; the divsion by $E$ is because the root edge of $t$ is not replaced. 
\end{proof}%

Expanding the first terms of $E$ in series and using \eqref{eq:BEint} we get
\begin{equation*}
	B(x,y)  = \frac{x^2}{2} \int \frac{E(x,y)}{y}dy 
	= \frac{y}{2}x^2 + \frac{y^3}{6}x^3 + \left(\frac{y^6}{24} + \frac{y^5}{4}\right)x^4 
	+ \left(\frac{y^9}{12} + \frac{y^8}{4} + \frac{7\,y^7}{12}\right)x^5 + \cdots
\end{equation*}
We need to compute the integral $\int (E(x,y)/y)dy$ in order to express $B$ in terms of $E$.
It is equivalent to the operation of unrooting a graph, and we do it using again the dissymmetry theorem.
We encode a 2-connected chordal graph with a canonical tree, whose nodes are of the following types: $e$ (edge), $s$ (series), and $t$ (triangulation). 
Notice that the edges in the tree can only be of type $e-s$ or $e-t$.
An example of the decomposition of a 2-connected chordal planar graph and its associated tree is illustrated by Figure \ref{fig:decomposition_tree}.
\begin{figure}
\centering
	\raisebox{20px}{\includegraphics[scale=1]{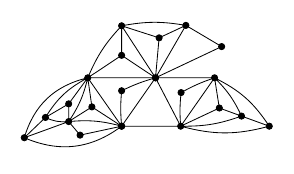}} \qquad
	\includegraphics[scale=1]{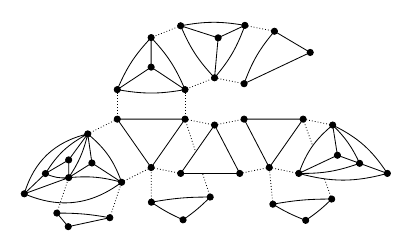}
	\includegraphics[scale=1.2]{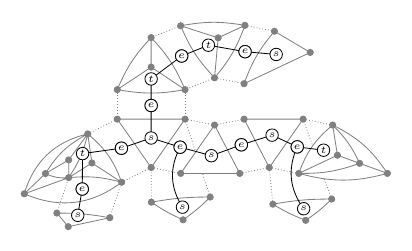}
\caption{
		Top left is an example of a 2-connected chordal planar graph.
		Top right is its decomposition into series and 3-connected components.
		Bottom is the tree associated with the decomposition.
	}
	\label{fig:decomposition_tree}
\end{figure}
We denote by $R_e(x)$, $R_s(x)$, $R_t(x)$, $R_{e-s}(x)$, $R_{e-t}(x)$ the generating functions encoding trees rooted at the corresponding specific type of node or edge. 
By symmetry we have $R_{e-s}(x) = R_{e\to s}(x) = R_{s\to e}(x)$, and the same goes for $R_{e-t}(x)$.

\begin{lemma}
Let $E = E(x) = E(x,1)$ and $B(x) = B(x,1)$.
Then
\begin{equation}\label{eq:BE}
	B(x) = \frac{x^2}{2}\left( E - \frac{xE^3}{12}\left( S\left( xE^3 \right)^2 + 5S\left( xE^3 \right) + 8 \right) \right).
\end{equation}
\end{lemma}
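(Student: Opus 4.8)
The plan is to bypass the integral $\int (E/y)\,dy$, which is unwieldy because $E$ is defined only implicitly by \eqref{eq:E}, and instead to recover the unrooted series $B(x)=B(x,1)$ combinatorially by applying the dissymmetry theorem (Lemma \ref{lem:species}) to the canonical $e/s/t$-decomposition tree. Since no two nodes of the same type are adjacent, every tree edge is of type $e$-$s$ or $e$-$t$, the directed edge-rooted class is twice the undirected one, and the theorem collapses to
\[
	B(x) = (R_e + R_s + R_t) - (R_{e-s} + R_{e-t}).
\]
The task then reduces to expressing these five rooted series in terms of $E$ and $S(xE^3)$.

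Before computing them I would fix the bookkeeping that tames the poles: each node owns and labels (with one variable $x$ apiece) the vertices of its own skeleton, while every skeleton edge is replaced by a network $E$ whose two poles are left unlabelled, those poles being skeleton vertices already counted; the substituted networks are then vertex-disjoint away from their shared poles, so every vertex is counted exactly once. I would then read off the five series, writing $S=S(xE^3)$. For $R_t$, a triangulation with $N$ vertices has $3N-6$ edges and so contributes $x^N E^{3N-6}=(xE^3)^NE^{-6}$, whence summing over unrooted labelled triangulations and reusing Lemma \ref{lem:integration_U} gives $R_t=E^{-6}U(xE^3)=\frac{x^3E^3}{24}(S-S^2)$. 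An $s$-node is a triangle with a network on each edge and full symmetry $S_3$, so $R_s=\frac{x^3}{6}E^3$; an $e$-node carries its two labelled poles (a factor $x^2/2$ for their transposition) and the network sitting between them, so $R_e=\frac{x^2}{2}E$. Rooting at an $e$-$t$ edge distinguishes the virtual edge shared by a bond and a triangulation, that is, a triangulation rooted at a directed edge with \emph{every} edge --- including the root, now carrying the bond-side network --- replaced by a network, whence $R_{e-t}=\frac{x^2}{2}T(xE^3)$; rooting at an $e$-$s$ edge gives a series triangle based on the shared edge, with its middle vertex labelled and networks on all three edges, whence $R_{e-s}=\frac{x^2}{2}\,xE^3=\frac{x^3}{2}E^3$.

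Finally I would substitute these into the displayed identity. Using $T(xE^3)=\frac12 xE^3 S$, every term except $R_e$ carries the common factor $x^3E^3$, and collecting them produces $B=\frac{x^2}{2}E-\frac{x^3E^3}{24}(S^2+5S+8)$; since $x^3E^3=x^2\cdot xE^3$, this is precisely \eqref{eq:BE}.

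The real work is not this final algebra but pinning down the five series correctly, and I expect that to be the main obstacle. Two points need care: the symmetry factors --- the $S_3$ of the series triangle, the transposition factors $1/2$ attached to bonds and to the undirected edge-rootings, and the orientation/face factor already absorbed into $T$ --- and the pole bookkeeping, where one must check that the ``own the skeleton, leave network poles unlabelled'' convention counts each vertex exactly once over the whole tree. One should also justify that the tree really has this restricted shape, namely that series and triangulation blocks are always separated by a bond; this is exactly what forces the edge set to consist only of $e$-$s$ and $e$-$t$ edges, makes the directed class twice the undirected one, and ensures that $R_{e-s}$ and $R_{e-t}$ exhaust the edge-rooted class.
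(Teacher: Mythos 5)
Your proof is correct and follows the paper's own route: the dissymmetry theorem applied to the $e/s/t$ decomposition tree, with identical $R_s$ and $R_t$. The only substantive difference is your convention for $e$-nodes (one per edge of the graph, rather than the paper's one per pole pair carrying at least two parallel components), so your individual series $R_e=\frac{x^2}{2}E$, $R_{e-s}=\frac{x^3}{2}E^3$, $R_{e-t}=\frac{x^2}{2}T(xE^3)$ differ from the paper's $\frac{x^2}{2}\left(E-xE^2-T(xE^3)/E\right)$, $\frac{x^3}{2}E^2(E-1)$, $\frac{x^2}{2}T(xE^3)(E-1)/E$; but the combination $R_e-R_{e-s}-R_{e-t}$ is the same in both cases, each convention yields a genuine tree (yours is the edge--component incidence tree, which one can check on $K_4$ and the diamond), and the final identity follows identically.
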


\begin{proof}
Applying Lemma \ref{lem:species} on the tree-decomposable class of labelled 2-connected chordal planar graphs and taking the symmetries into account gives
\begin{equation}\label{eq:B_dissymmetry}
	B(x) = R_s(x) + R_e(x) + R_t(x) - R_{e-s}(x) - R_{e-t}(x).
\end{equation}
Next, we compute each of the series on the right-hand side of \eqref{eq:B_dissymmetry}.

\textit{Rooting at nodes of the tree.}
Rooting the tree at a $s$-node corresponds to fixing three unordered vertices of a triangle and attaching a (possibly empty) network at each of the three edges. 
Thus, $R_s(x) = x^3E^3/6$.
Furthermore, unless the graph is reduced to a single edge encoded by $x^2/2$, nodes of type $e$ have degree at least 2 in the tree. 
Therefore, rooting at an $e$-node corresponds to fixing the two unordered vertices of the edge and attaching a network with at least two parallel components. 
This is encoded by substracting the first two terms of the exponential in \ref{eq:E}, which gives $R_e(x) = \frac{x^2}{2}\left(E - xE^2 - T(xE^3)/E\right)$.
Finally, rooting at a $t$-node corresponds to attaching a network at each of the edges of an unrooted chordal triangulation. 
Considering that $z$ in $U(z)$ encodes the number of vertices minus 2, this gives $R_t(x) = U(xE^3)/E^6$.

\textit{Rooting at edges of the tree.}
Rooting at an ($e-s$)-edge corresponds to rooting a graph at a triangle with one of its sides distinguished. 
Therefore, we fix the three vertices of the triangle (the two vertices on the root edge are unordered) and we attach a non-empty network to the root edge and any network to each of the two remaining edges. 
This yields $R_{e-s}(x) = \frac{x^3}{2} E^2\left(E - 1\right)$.
Finally, rooting at an ($e-t$)-edge corresponds to rooting at a triangulation with a distinguished edge and attaching a network to each of its edges. 
The term $T\left(xE^3\right)$ is explained as before and this gives 
$R_{e-t}(x) = \frac{x^2}{2} T(xE^3)(E - 1)/E$.

Equation \eqref{eq:BE} is finally derived from \eqref{eq:B_dissymmetry}, using \eqref{eq:T} and \eqref{eq:U} with $z = xE^3$, and simplifying. 
\end{proof}

\subsection{Connected graphs and arbitary graphs}

Let $C^\bullet(x,y) = xC_x(x,y)$ be the generating function of vertex rooted chordal connected graphs, where $C_x$ denotes the derivative with respect to $x$. 
Since chordality is preserved under blocks, we have the usual relation between $C^\bullet$ and $B$ (see \cite{GNR13}) enconding the recursive decomposition of a (rooted) connected graph into blocks:
\begin{equation}\label{eq:BC}
	C^\bullet(x,y) = x e^{B'(C^\bullet(x,y),y)}.
\end{equation}
Using the expansion of $B(x,y)$ one gets
\begin{align*}
	C(x,y) = x + y\frac{x^2}{2} + \left(y^3 + 3y^2\right)\frac{x^3}{3!}
	 + \left(y^6 + 6y^5 + 12y^4 + 16y^3\right)\frac{x^4}{4!} \\ 
	 + \left(10y^9 + 30y^8 + 90y^7 + 135y^6 + 150y^5 + 125\,y^4\right)\frac{x^5}{5!} + \cdots 
\end{align*}

Finally the generating function of all chordal planar graphs is $G(x,y) = e^{C(x,y)}$, giving
\begin{align*}
	G(x,y) = 1 + x 
 	+ \left(y + 1\right)\frac{x^2 }{2}
 	+ \left(y^3 + 3y^2 + 3y + 1\right)\frac{x^{3}}{3!}
 	+ \left(y^6 + 6y^5 + 12y^4 + 20y^3 + 15y^2 + 6y + 1\right)\frac{x^4}{4!} \\
 	+ \left(10y^9 + 30y^8 + 90y^7 + 140y^6 + 180y^5 + 195y^4 + y^3 + 45y^2 + 10y + 1\right)\frac{x^5}{5!} + \cdots
\end{align*}
For instance the term $12y^4x^4/4!$ is because there are $15=\binom{6}{4}$ labelled graphs with four vertices and four edges, from which we must remove the three labellings of $C_4$, the smallest non-chordal graph.

%
%
\section{Proof of Theorem \ref{th:graph}}\label{sec:proof1}

\subsection{2-connected graphs}

Using \eqref{eq:T} with $z = x(1 + F)^3$ and setting $y=1$, $F = F(x) = E(x) - 1$ and $S = S(x(1 + F)^3)$, we transform Equations \eqref{eq:S} and \eqref{eq:E} into a system amenable to the so-called Drmota-Lalley-Wood's theorem~\cite[Theorem 2.33]{D09}, with $u=1$, as follows:
\begin{equation}\label{sys:EF}
	\renewcommand{\arraystretch}{1.5}
	\begin{array}{ll}
		F = \exp\left( x(1 + F)^2 + \displaystyle\frac{x(1 + F)^2S}{2} \right) - 1, \\
		S = x(1 + F)^3(1 + S)^3.
	\end{array}
\end{equation}
Let $\Phi(x,S,F)$ and $\Psi(x,S,F)$ be the right hand-side of the first and second equation in \eqref{sys:EF}, respectively.
Those functions are entire and define a system with a strongly connected dependency graph between variables $S$ and $F$.
Furthermore, both have non-negative coefficients and vanish at $x=0$, while they satisfy $\Phi(x,0, 0) \ne 0$ and $\Psi(x,0,0) \ne 0$, but also $\Phi_x(x,S,F) \ne 0$ (where $\Phi_x = \partial \Phi/\partial x$) and $\Psi_x(x,S,F) \ne 0$.
Finally, Sytem~\eqref{sys:EF} extended by its Jacobian admits a solution that is non-zero.
It is given by the following approximations:
\begin{equation}\label{sol:sysE}
	\rho_b\approx 0.092859, 
	\quad E_0 = E(\rho_b) = 1 + F(\rho_b) \approx 1.16454, 
	\quad S_0 = S(\rho_bE_0^3)\approx 0.41919.
\end{equation}
Thus the hypotheses of \cite[Theorem 2.33]{D09} are verified.
This implies in particular that $\rho_b$ is the unique dominant singularity of the function $E(x)$, i.e. on the boundary of the disk of convergence, and that $E(x)$ admits the following analytic continuation in a domain of the form $\Delta(R_b,\phi_b)$ for some $R_b > \rho_b$ and $0 < \phi_b < \pi/2$:
\begin{equation}\label{eq:puisE}
	E(x) = E_0 - E_1\sqrt{1 - \frac{x}{\rho_b}} + O\left( 1 - \frac{x}{\rho_b} \right)
	\qquad\text{for } x\sim\rho_b \text{ and  } x\in\Delta(R_b,\phi_b),
\end{equation}
where $E_1 > 0$ is computed next.
Since $S$ is itself a function of $x$ and $F$, \eqref{sys:EF} can be re-written as $F = \Theta(x,F)$, where $\Theta$ is analytic at $(\rho_b,F_0)$ with $F_0 = F(\rho_b)$. 
One checks that $\Theta_x(\rho_b,F_0)\neq 0$, $\Theta_F(\rho_b,F_0) = 0$ and $\Theta_{FF}(\rho_b,F_0)\neq 0$.
And we can apply \cite[Lemma VII.3]{FS09} (see also \cite[Remark 2.20]{D09}) to obtain
\begin{equation}\label{eq:puisE_coeff}
	E_1 = F_1 = \sqrt{\frac{2\rho_b \Theta_x(\rho_b,F_0)}{\Theta_{FF}(\rho_b,F_0)}}
	\approx 0.092354.
\end{equation}
Furthermore, \cite[Theorem 2.33]{D09} implies a similar results for $S = S(x(1 + F)^3)$.
Note also that $\rho_bE_0^3 = 0.14665 < 4/27$, where $4/27$ is the dominant singularity of $S(z)$.
This implies thar the composition scheme $S(xE(x)^3)$ is subcritical in the sense of \eqref{eq:subcritical}. 

With those results at hand, we can finally consider the generating function $B(x)$.
Given the expression~\eqref{eq:BE} and the fact that the scheme is subcritical, the dominant singularity of $B(x)$ is the same as that of $E(x)$ and it is furthermore unique.
We show next that $B(x)$ admits a singular expansion at $z=\rho_b$ similar to $E(x)$.
First we extend the system \eqref{sys:EF} so that it includes the variable $y$:
\begin{equation}\label{sys:EFy}
	\renewcommand{\arraystretch}{1.5}
	\begin{array}{ll}
		F = y\exp\left( x(1 + F)^2 + \displaystyle\frac{x(1 + F)^2S}{2} \right) - 1, \\
		S = x(1 + F)^3(1 + S)^3,
	\end{array}
\end{equation}
where now $F = F(x,y)$.
By \cite[Theorem 2.33]{D09} (setting $u=y$) there exist three functions $\rho_b(y)$, $f_0(y)$ and $f_1(y)$ analytic in a neighbourhood $W$ of 1 such that for $y\in W$ and $x\sim\rho_b(y)$ with $|\text{arg}(x - \rho_b(y))|\ne 0$ the following singular expansion holds
\begin{equation}\label{eq:puisExy}
	E(x,y) = 1 + F(x,y) = 1 + f_0(y) - f_1(y)\sqrt{1 - \frac{x}{\rho_b(y)}} + O\left(1 - \frac{x}{\rho_b(y)}\right),
\end{equation}
where $\rho_b(1) = \rho_b$, $1 + f_0(1) = E_0$ and $f_1(1) = E_1$.
From there, applying \cite[Theorem 2.30]{D09} to \eqref{eq:BEint} and setting $y=1$, we obtain that $B(x) = B(x,1)$ admits an analytic continuation of the form
\begin{equation*}
	B(x) = B_0 - B_2\left(1 - \frac{x}{\rho_b}\right) + B_3\left(1 - \frac{x}{\rho_b}\right)^{3/2}
	+ O\left(1 - \frac{x}{\rho_b}\right)^2
	\qquad\text{for } x\sim\rho_b \text{ and  } x\in\Delta(R_b,\phi_b).
\end{equation*}
The above coefficients can be computed by substituting into \eqref{eq:BE} the expansions of $E(x)$ and $S(xE(x)^3)$ when $x=\rho_b(1 - X^2)$, with $X = \sqrt{1 - x/\rho_b}$.
This gives $B_0\approx 0.0044796$, $B_2\approx 0.0085328$ and $B_3\approx 0.00038321$.
The estimate on $b_n$ follows from Lemma \ref{lem:transfer}, with $b = 3B_3/(4\sqrt{\pi})\approx 0.00016215$.
\qed

\subsection{Connected and arbitrary graphs}

The composition scheme \eqref{eq:BC} is subcritical because $B''(\rho_b)\to\infty$ (see \cite{GNR13}).
This means in particular that the singularities of $C^\bullet(x)$ come from a branch point and not from those of $B(x)$ and are obtained by solving 
\begin{align*} 
	\rho = \tau e^{-B'(\tau)}
	\quad\text{and}\quad
	\tau B''(\tau) = 1,
	\qquad\qquad\text{ with } \tau = C^\bullet(\rho) < \rho_b.
\end{align*} 
To find such a solution, one must first compute $E'(x)$ and $E''(x)$ and then $B''(x)$. 
This is a routine but lengthy computation, best solved numerically\footnote{We used \texttt{Maple 2021} for those computations.} 
together with equations \eqref{eq:S} and \eqref{eq:E}, and which gives the following approximate solutions:
\begin{equation}\label{sol:tau}
	\tau\approx 0.092859
	\quad\text{and}\quad 
	E(\tau)\approx 1.16446.
\end{equation}
So that we obtain a singularity of $C^\bullet(x)$ at $x=\rho$ given by 
\begin{align*}
	\rho = \tau e^{-B'(\tau)}\approx 0.084088.
\end{align*}

As before $C^{\bullet}(x)$ can be extended analytically to a domain of the form $\Delta(R,\phi)$ for some $R > \rho$ and $0 < \phi < \pi/2$. 
The same holds for $C(x)$ (see \cite[Proposition 3.10.(1)]{GNR13}), which in fact verifies
\begin{align*}
	C(x) = C_0 - C_2\left(1 - \frac{x}{\rho}\right) + C_3\left(1 - \frac{x}{\rho}\right)^{3/2} + O\left(1 - \frac{x}{\rho}\right)^2
	\qquad\text{for } x\sim\rho \text{ and  } x\in\Delta(R,\phi).
\end{align*}
The above coefficients are given by:
\begin{align*}
	C_0 = \tau (1+ \log \rho - \log \tau) + B(\tau) \approx 0.00037470,
	\qquad C_2 = \tau \approx 0.092859 \\
	\text{and } C_3 = \frac{3}{2}\sqrt{\frac{2\rho \exp(B'(\tau))}{\tau B'''(\tau)-\tau B''(\tau)^2+2B''(\tau)}}\approx 0.00027194.
\end{align*}
The estimate for $c_n$ is again a consequence of Lemma \ref{lem:transfer}.
The same goes for the series  $G(x,y) = e^{C(x,y)}$ of arbitrary chordal planar graphs.
Since $G(x) = e^{C_0}(1 - C_2(1 - x/\rho ) + C_3(1 - x/\rho )^{3/2} + O(1 - x/\rho )^2)$ for $x\sim\rho$ and $x\in\Delta(\phi,R)$, we have
\begin{align*}
	& G_0 = e^{C_0} \approx 1.00037, &
	& G_2 = C_2e^{C_0} \approx 0.092894, &
	& G_3 = C_3e^{C_0} \approx 0.00027205,
\end{align*}
and the estimate for $g_n$ follows. 
This concludes the proof of Theorem \ref{th:graph}.
\qed

%
%
\section{Simple chordal planar maps}\label{sec:proof2}

\paragraph{Decomposition of 2-connected simple chordal maps.}

Let $D(z)$ be the generating function of simple 2-connected chordal maps, where $z$ marks the number of edges minus 1, and let $S(z)$ be the generating function of ternary trees satisfying \eqref{eq:S}.
Similarly to the case of graphs, a simple 2-connected chordal map can be decomposed into a \emph{sequence} of smaller chordal maps. 
As opposed to the situation for graphs the planar embedding provides a linear ordering and we use the sequence instead of the set construction. 
The maps in the sequence are either a triangle rooted at an edge, where each side of the two non-root edges (four sides in total) is replaced by a map, or 3-connected maps in which the two sides of every edge are replaced by a map. 
This gives
\begin{equation}\label{eq:maps_D}
	D(z) = \frac{1}{1 - z^2D(z)^4\left(1 + S\left(z^3D(z)^6\right)\right)}.
\end{equation}
This implicit equation determines $D(z)$ uniquely as a series with non-negative coefficients.

\paragraph{Proof of Theorem \ref{th:map}, item 1.}

Let $B(z)$ be the generating function counting simple 2-connected chordal maps, with $z$ now marking the total number of edges, so that $B = B(z)$.
Algebraic elimination between \eqref{eq:S} and \eqref{eq:maps_D} gives the following irreducible polynomial equation satisfied by $B = B(z)$:
\begin{equation}\label{eq:maps_B_poly}
	B^9 - z^2B^5 + z^3B^4 + z^3B^3 - 3z^4B^2 + 3z^5B - z^6 = 0.
\end{equation}
Therefore, $B(z)$ is an algebraic function and its analysis in the rest of the proof will follow the approach detailed in \cite[Chapter VII.7]{FS09}.
For instance, $B(z)$ can be represented at $z=0$ as a Taylor series with non-negative coefficients and radius of convergence $\sigma_b$, for some $\sigma_b > 0$, corresponding to a branch of the curve~\eqref{eq:maps_B_poly} passing through the origin, as follows:
\begin{align*}
	B(z) = z + z^3 + 5z^5 + z^6 + 35z^7 + 16z^8 + 288z^9 + O(z^{10}).
\end{align*}

Next, we find the value of $\sigma_b$.
By Pringsheim's theorem (see \cite[Theorem IV.6]{FS09}), it must be a singularity of $B(z)$.
Since $B(z)$ is algebraic, its singularities must be among the roots of the discriminant of \eqref{eq:maps_B_poly} with respect to $B$, which up to a trivially non-zero factor is equal to 
\begin{equation*}
	387420489z^6 + 573956280z^5 + 184705272z^4 - 81168524z^3 
	- 15907392z^2 + 3326272z - 135424.
\end{equation*} 
This polynomial admits $\sigma_b \approx 0.27370$ as unique positive real root and it can be readily checked that no other root $\psi$ satisfies $|\psi| = \sigma_b$.

Finally, we determine the singular expansion of $B(z)$ locally around $\sigma_b$.
As $B(z)$ is algebraic and has no other singularity on the circle of radius $\sigma_b$, there exists $R_b' > \sigma_b$ and $0 < \phi_b' < \pi/2$ for which its representation at $z=0$ admits an analytic continuation to a domain at $z = \sigma_b$ of the form $\Delta(R_b',\phi_b')$.
It can in fact be computed from \eqref{eq:maps_B_poly} using Newton's \textit{polygon algorithm}.
This gives a singular expansion of the form:
\begin{equation}\label{eq:maps_puis_B}
	B(z) = B(\sigma_b) + b_1\sqrt{1 - \frac{z}{\sigma_b}} + O\left(1 - \frac{z}{\sigma_b}\right),
	\qquad\text{for } z\sim\sigma_b \text{ and  } z\in\Delta(R_b',\phi_b'),
\end{equation} 
where $B(\sigma_b)\approx 0.33301$ and $b_1\approx 0.12704$.
The estimate on $B_n$ then follows from Lemma \ref{lem:transfer}.
\qed

\paragraph{Decomposition of simple chordal maps.}

Let  $M(z)$ be the generating function of all simple chordal maps, where  $z$ marks the total number of edges. 
The decomposition of a map into block is given by the equation 	
\begin{equation}\label{eq:maps_M}
	M(z) = B\left(z(1 + M(z))^2\right),
\end{equation}
reflecting the fact that a map is obtained from its 2-connected core by attaching a (possibly empty) map at each corner \cite{T63}. 
Since being simple and chordal is preserved by taking 2-connected components, the same equation holds for simple chordal maps.

\paragraph{Proof of Theorem \ref{th:map}, item 2.}

We proceed as in the proof of item 1.
First, by algebraic elimination between \eqref{eq:maps_B_poly} and \eqref{eq:maps_M}, we obtain an irreducible polynomial equation satisfied by $M = M(z)$:
\begin{equation}\label{eq:maps_M_poly}
	\begin{split}
		& z^6M^{12} + 3z^5\left(4z - 1\right)M^{11} 
		+ z^3\left(66z^3 - 30z^2 + 3z - 1\right)M^{10} \\
		& + \left(220z^6 - 135z^5 + 24z^4 - 7z^3 + z^2 - 1\right)M^9  
		+ z^2\left(495z^4 - 360z^3 + 84z^2 - 21z + 4\right)M^8 \\
		& + z^2\left(792z^4 - 630z^3 + 168z^2 - 35z + 6\right)M^7 
		+ z^2\left(924z^4 - 756z^3 + 210z^2 - 35z + 4\right)M^6 \\
		& + z^2\left(792z^4 - 630z^3 + 168z^2 - 21z + 1\right)M^5
		+ \left(495z^6 - 360z^5 + 84z^4 - 7z^3\right)M^4 \\
		& + z^3\left(220z^3 - 135z^2 + 24z - 1\right)M^3
		+ 3z^4\left(22z^2 - 10z + 1\right)M^2
		+ 3z^5\left(4z - 1\right)M
		+ z^6 = 0,
	\end{split}
\end{equation}
From the curve \eqref{eq:maps_M_poly} we get that $M(z)$ can be represented at $z=0$ as the following Taylor series with non-negative coefficients and radius of convergence $\sigma > 0$:
\begin{align*}
	M(z) = z + 2z^2 + 6z^3 + 22z^4 + 92z^5 + 419z^6 + 2025z^7 
	+ 10214z^8 + 53192z^9 + O(z^{10}).
\end{align*}
The discriminant of \eqref{eq:maps_M_poly} with respect to $M$ is, up to a trivially non-zero factor, given by
\begin{equation}\label{eq:maps_poly_sing_M}
	\begin{split}
		& 2035256037376z^{12} - 2215690119168z^{11} + 6474387490048z^{10} 
		+ 1262789263168z^9 \\ 
		& - 3620212090976z^8 + 1275725763644z^7 - 301902286683z^6 + 60575733276z^5 \\
		& - 13112588384z^4 - 5212588972z^3 + 1812419712z^2 - 148471488z + 3656448.
	\end{split}
\end{equation}
It admits two positive real roots, given approximately by $0.15616$ and $0.49512$. 
However $0.49512$ cannot be the radius of convergence of $M(z)$ since it is larger than $\sigma_b$.
Therefore $\sigma\approx 0.15616$, and it can be readily checked that \eqref{eq:maps_poly_sing_M} admits no other zero of modulus $\sigma$.
This means that there exists $R' > \sigma$ and $0 < \phi' < \pi/2$ for which the representation of $M(z)$ at $z=0$ admits an analytic continuation to a domain at $z = \sigma$ of the form $\Delta(R',\phi')$.
It is given by
\begin{equation}\label{eq:maps_puis_M}
	M(z) = M(\sigma) + m_1\sqrt{1 - \frac{z}{\sigma}} + O\left(1 - \frac{z}{\sigma}\right),
	\qquad\text{for } z\sim\sigma \text{ and  } z\in\Delta(R',\phi'),
\end{equation} 
where $M(\sigma)\approx 0.31055$ and $m_1\approx 0.22326$.
Note that the class of simple chordal maps is subcritical in the sense, similar to \eqref{eq:subcritical}, that the composition scheme in \eqref{eq:maps_M} is subcritical, that is, $\sigma(1 + M(\sigma)^2)\approx 0.26821 < \sigma_b$.
The estimate on $M_n$ is obtained from Lemma \ref{lem:transfer} as before, and this concludes the proof of Theorem \ref{th:map}. 
\qed

%
%
\section{Concluding remarks}

From the system \eqref{sys:EFy} and \cite[Theorem 2.35]{D09} we could obtain, applying the so-called `quasi-powers theorem' \cite{FS09}, a central limit theorem for the number of edges in a uniform random 2-connected chordal planar graph with $n$ vertices as $n\to\infty$.
This result is to be expectd and fits into a general scheme of similar Gaussian parameters in subcritical graph classes (see for instance \cite{DFKKR11}, and \cite{DRR17} and \cite{RRW20} for some generalisations).
It would be of interest to study in the context of chordal planar graphs other parameters, particularly extremal parameters \cite{GNR13}.

Furtermore, by sligthly adapting the scheme developed in this paper, one could in principle enumerate several related families of chordal graphs, such as outerplanar and series-parallel graphs,  planar multigraphs and non-necessarily simple planar maps. 
But also non-planar graphs, such as forbidding $K_{3,3}$ or $K_5$ as a minor.
For chordal graphs, forbidding $K_5$ as a minor is equivalent to the property of having tree-width at most three.  
A future line of research is to enumerate chordal graphs with bounded tree-width.

To conclude we display the first numbers of labelled chordal planar graphs (resp. maps) counted by vertices in Table \ref{tab:graphs} (resp. counted by edges in Table \ref{tab:maps}) for the different families studied in this work.
\begin{table}[htb]
\centering
\scriptsize
\begin{tabular}{crrr}
\hline
$n$ & $g_n$ & $c_n$ & $b_n$\\
\hline
1 & 1 & 1 & 0 \\
2 & 2 & 1 & 1 \\
3 & 8 & 4 & 1 \\
4 & 61 & 35 & 7 \\
5 & 821 & 540 & 110 \\
6 & 17962 & 13116 & 2880 \\
7 & 589912 & 462868 & 108486 \\
8 & 26990539 & 22189056 & 5376448 \\
9 & 1611421595 & 1364476032 & 330554736 \\
10 & 119106036226 & 102768330140 & 24223100940 \\
11 & 10475032926304 & 9150009283316 & 2056900853260 \\
12 & 1064759262580675 & 937871756182824 & 198279609266376 \\
13 & 122455558249650523 & 108501459033647056 & 21365210239261824 \\
14 & 15683814373288014514 & 13957140054455406368 & 2542622031178234096 \\
15 & 2210104382919809469776 & 1973316500054545453200 & 331005569819483825280 \\
16 & 339419270505312015418873 & 303844760227083629476736 & 46769563108388612386560 \\
17 & 56377137858208036652271961 & 50574398535605806604877952 & 7125735843407702680130176 \\
18 & 10064213826097447392585326650 & 9043978529936559892024953936 & 1164214191212133452455716432 \\
19 & 1920763688236792486611031950040 & 1728560464917767130397726200016 & 203006967721530831955744610256 \\
20 & 390147921384971528200998632189581 & 351542184165686400289151814740320 & 37624686779731200180043318035040 \\
\hline
\end{tabular}
\caption{Numbers of arbitrary, connected and 2-connected labelled chordal planar graphs with $n$ vertices.}
\label{tab:graphs}
\end{table}
\begin{table}[htb]
\centering
\scriptsize
\begin{tabular}{crrr}
\hline
$n$ & $M_n$ & $B_n$ \\
\hline
1 & 1 & 1 \\
2 & 2 & 0 \\
3 & 6 & 1 \\
4 & 22 & 0 \\
5 & 92 & 5 \\
6 & 419 & 1 \\
7 & 2025 & 35 \\
8 & 10214 & 16 \\
9 & 53192 & 288 \\
10 & 283921 & 210 \\
11 & 1545326 & 2607 \\
12 & 8544766 & 2612 \\
13 & 47867107 & 25155 \\
14 & 271091848 & 31885 \\
15 & 1549624321 & 254255 \\
16 & 8929009486 & 386672 \\
17 & 51807558686 & 2663101 \\
18 & 302430309885 & 4682253 \\
19 & 1774979731304 & 28696460 \\
20 & 10467456794046 & 56747900 \\
\hline
\end{tabular}
\caption{Numbers of arbitrary and 2-connected simple chordal maps with $n$ edges.}
\label{tab:maps}
\end{table}

\subsection*{Acknowledgements}

We gratefully acknowledge earlier discussions on this project with Erkan Narmanli.
M.N. was supported by grants MTM2017-82166-P and PID2020-113082GB-I00, the Severo Ochoa and María de Maeztu Program for Centers and Units of Excellence in R\&D (CEX2020-001084-M).
C.R. was supported by the grant Beatriu de Pin\'os BP2019, funded by the H2020 COFUND project No 801370 and AGAUR (the Catalan agency for management of university and research grants), and the grant PID2020-113082GB-I00 of the Spanish ministry of Science and Innovation.

\bibliographystyle{abbrv}
\bibliography{biblio_chordal_planar}

\end{document}